\def\mymathfont{\mathbf}
\def\(#1\){$(${\sl #1}\/$)$}
\newcommand{\R}{\mymathfont{R}}
\newcommand{\C}{\mymathfont{C}}
\newcommand{\Z}{\mymathfont{Z}}
\newcommand{\CP}{\mathop{\mymathfont{C}\mymathfont{P}}\nolimits}
\newcommand{\Cl}{\mathop{\mathrm{Cl}}\nolimits}
\newcommand{\Int}{\mathop{\mathrm{Int}}\nolimits}
\newcommand{\Bd}{\partial}
\newcommand{\M}{\mathop{\mathrm{Mod}}\nolimits}
\newcommand{\vspan}{\mathop{\mathrm{span}}\nolimits}
\newcommand{\grad}{\nabla}
\def\green{\color[rgb]{0,.4,0}}
\def\blu{\color[rgb]{0.2,0,0.7}}
\def\bla{\color[rgb]{0,0,0}}
\newtheoremstyle{thm}{6pt plus 1pt minus 1pt}{6pt plus 1pt minus 1pt}{\slshape}{}{\scshape}{.}{5pt plus 1pt minus 1pt}{}
\newtheoremstyle{def}{6pt plus 1pt minus 1pt}{6pt plus 1pt minus 1pt}{}{}{\scshape}{.}{5pt plus 1pt minus 1pt}{}
\newtheoremstyle{rmk}{6pt plus 1pt minus 1pt}{6pt plus 1pt minus 1pt}{}{}{\scshape}{.}{5pt plus 1pt minus 1pt}{}
\newtheoremstyle{claim}{6pt plus 1pt minus 1pt}{6pt plus 1pt minus 1pt}{}{}{\slshape}{.}{5pt plus 1pt minus 1pt}{}
\theoremstyle{thm}
\newtheorem{newstatement}{newstatement}
\newtheorem{lemma}[newstatement]{Lemma}
\newtheorem*{theorem*}{Main Theorem}
\newtheorem{proposition}[newstatement]{Proposition}
\theoremstyle{def}
\newtheorem{definition}[newstatement]{Definition}
\theoremstyle{rmk}
\newtheorem{remark}[newstatement]{Remark}
\theoremstyle{claim}
\renewcommand{\section}{\@startsection%
{section}
{1}
{0mm}
{1.5\bigskipamount}
{0.5\bigskipamount}
{\centering\normalsize\sc}}
\renewcommand{\subsection}{\@startsection%
{subsection}
{2}
{0mm}
{0.5\bigskipamount}
{0.5\bigskipamount}
{\normalsize\sc}}
\renewcommand{\paragraph}{\@startsection%
{paragraph}
{4}
{0mm}
{\bigskipamount}
{0pt}
{\normalsize\sl}}
\let\expandafter\oldproof\csname\string\proof\endcsname
\let\oldendproof\endproof
\renewenvironment{proof}[1][\proofname]{%
  \oldproof[\slshape #1]%
}{\oldendproof}
\def\provedboxcontents#1{$\square$}
\let\phi\varphi
\title
[Holomorphic filling of $S^3$]
{A concave holomorphic filling\\[1pt]
of an overtwisted contact $\mathbf 3$-sphere}
\author{Naohiko Kasuya and Daniele Zuddas}
\address{Naohiko Kasuya, Department of Mathematics, Kyoto Sangyo University, Kamigamo-Motoyama, Kita-ku, Kyoto, 603-8555, Japan.}
\email{nkasuya@{\tt cc}.kyoto-su.ac.jp}
\address{Daniele Zuddas, Lehrstuhl Mathematik VIII, Mathematisches Institut der Universit\"{a}t Bayreuth, NW II, Universit\"{a}tsstr. 30, 95447 Bayreuth, Germany.}
\email{zuddas@uni-bayreuth.de}
\date{\today}
\keywords{Non-K\"{a}hler complex surface, overtwisted contact $3$-manifold, concave filling}
\subjclass[2010]{32V40, 32Q55, 57R17}
\begin{document}

\begin{abstract}
In this paper we prove that the closed $4$-ball admits non-K\"ahler complex structures with strictly pseudoconcave boundary. Moreover, the induced contact structure on the boundary $3$-sphere is overtwisted.
\end{abstract}

\maketitle

\section{Introduction}
In \cite{DKZ17}, Antonio J. Di Scala and we constructed a family of pairwise inequivalent complex surfaces 
$E = E(\rho_1, \rho_2)$ together with a holomorphic map $f \colon E \to \CP^1$ admitting compact fibers (the parameters $\rho_1$ and $\rho_2$ are such that $1 < \rho_2 < \rho_1^{-1}$). A relevant property of $E$ is that it is diffeomorphic to $\R^4$, hence providing an extension to real dimension four of a result of Calabi and Eckmann \cite{CE53}.

The compact fibers of $f$ were shown to be smooth elliptic curves and a singular rational curve with one node, and these are the only compact complex curves of $E$.
The existence of embedded compact holomorphic curves implies the non-existence of a compatible symplectic structure on $E$. Then, the complex surface $E$ is non-K\"ahler.

Further, in \cite{DKZ15b} we proved that $E$ cannot be realized as a complex domain in any smooth compact complex surface.

In the present paper, we study the structure of $E$ away from a compact subset, by providing an exhausting family of embedded strictly pseudoconcave 3-spheres (Lemma \ref{main thm1}). From this we derive our main theorem. In order to state our results, we recall the notion of {\sl Calabi-Eckmann type complex manifold} introduced in \cite{DKZ17}, which was inspired by the results of \cite{CE53}.

\begin{definition}
A complex manifold $M$ is said to be of Calabi-Eckmann type if there exist a compact complex manifold $X$ of positive dimension, and a holomorphic immersion $k \colon X \to M$ which is null-homotopic as a continuous map. 
\end{definition}

\begin{theorem*}
The closed ball $B^4$ admits a Calabi-Eckmann type complex structure $J$ with strictly pseudoconcave boundary. Moreover, the $($negative$)$ contact structure $\xi$ determined on $\Bd B^4 = S^3$ by the complex tangencies is overtwisted and homotopic as a plane field to the standard positive tight contact structure on $S^3$.
\end{theorem*}

In other words, $(B^4, J)$ is a concave holomorphic filling of the overtwisted contact sphere $(S^3, \xi)$. As far as we know, this is the first example of this sort in literature.

This 4-ball arises as a smooth submanifold of $E$ containing certain compact fibers of the map $f \colon E \to \CP^1$, and so it is evidently of Calabi-Eckmann type.

Our strategy for proving the theorem relies on finding a holomorphic open book decomposition embedded in $E$, whose pages are holomorphic annuli and whose monodromy is a left-handed (negative) Dehn twist of the annulus, hence the underlying 3-manifold $M \subset E$ is homeomorphic to $S^3$.

Moreover, we prove that $M$ can be smoothly approximated by a 1-parameter family of strictly pseudoconcave embedded 3-spheres $M_\tau \subset E$, for a suitable parameter $\tau \in (0,1)$. Namely, the complex domain outside the embedded 4-ball with corners $D \subset E$ bounded by $M$ is foliated by strictly pseudoconcave 3-spheres. This implies the existence of a strictly plurisubharmonic function on $E - D$.

As a consequence, the open book decomposition of $M$ is compatible with the contact structure of $M_{\tau}$ given by complex tangencies, which is then overtwisted. For the basics of the three-dimensional contact topology we use throughout the paper, the reader is referred, for example, to the book of Ozbagci and Stipsicz \cite[Chapters 4 and 9]{OZ04}.

\begin{remark}
By Eliashberg's classification of overtwisted contact structures on closed oriented $3$-manifolds \cite{El89}, the negative contact structure in the main theorem is uniquely determined up to isotopy.
\end{remark}

We point out that in all (odd) dimensions greater than three, a closed co-oriented overtwisted contact manifold (see \cite{BEM} for the definition) cannot be the strictly pseudoconcave boundary of a complex manifold. Indeed, such a holomorphic filling would give a strictly pseudoconvex CR structure on the contact manifold with reversed orientation. Then, it can be filled by a Stein space (Rossi's theorem \cite{Ro65}), and therefore it can be filled by a K\"ahler manifold (Hironaka's theorem \cite{Hi64}), which is impossible for an overtwisted contact manifold. In this sense, our example is peculiar to dimension three.

Lisca and Matic \cite[Theorem 3.2]{LM97} proved that any Stein filling $W$ of a contact 3-manifold can be realized as a domain in a smooth complex projective surface $S$. Then, $S - \Int W$ is a concave holomorphic filling of a Stein fillable contact 3-manifold.

On the other hand, Etnyre and Honda \cite{EH02} and Gay \cite{Ga02} proved that any closed co-oriented contact 3-manifold admits infinitely many pairwise inequivalent concave symplectic fillings.

Moreover, Eliashberg in \cite{El85} proved that for any closed contact $3$-manifold $(N, \xi)$, the 4-manifold $N \times [0,1]$ admits a complex structure such that the height function is strictly plurisubharmonic, providing a holomorphic cobordism of $(N, \xi)$ with itself. However, its proof is not constructive.

Our result gives a rather explicit complex cobordism of an overtwisted contact $3$-sphere with itself, by taking $\cup_{\tau \in [1/3,\, 1/2]} M_\tau \cong S^3 \times [0, 1]$ as a complex domain in $E$.

The paper is organized as follows.
In Section \ref{neighbor}, we recall the construction of the complex surface $E$ given in \cite{DKZ17} and present a holomorphic model of the complement $C = E - \Int D_1$, which will be helpful for the proof of the main theorem. In Section \ref{psh}, we construct a holomorphic open book decomposition embedded in $E$. Finally, in Section \ref{contact}, we prove the main theorem by showing the existence of a strictly plurisubharmonic function near the embedded open book decomposition based on contact topology.

\section{The complex surface $E$}~\label{neighbor}
In this section, we recall the construction of  $E$, by sketching the original one in \cite{DKZ17}. 
This will be helpful for the proof of our main theorem.

Throughout this paper we make use of the following notation: $\Delta(a, b) = \{z\in \C \mid a< |z| < b\}$, $\Delta[a, b] = \{z\in \C \mid a\leq |z| \leq b\}$, $\Delta(a) = \{z\in \C \mid |z| < a\}$ and similarly with mixed brackets. We also denote the closed disk and the circle of radius $a$ in $\C$, respectively, by $B^2(a)$ and $S^1(a)$. When $a = 1$, we drop it from the notation.

According to \cite{DKZ17}, the construction of $E = E(\rho_1, \rho_2)$ proceeds as follows. Let $\rho_1$ and $\rho_2$ be positive numbers such that $1 < \rho_2 < \rho_1^{-1}$, and choose $\rho_0$ such that $\rho_1 \rho_2^{-1} < \rho_0 < \rho_1$.

We want to realize $E$ as the union of two pieces. 
One of them is the product $V = \Delta (1, \rho_2)\times \Delta (\rho_0^{-1})$, and the other one is the total space $W$ of a genus-1 holomorphic Lefschetz fibration $h \colon W \to \Delta (\rho_1)$ with only one singular fiber $\Sigma$.

In order to define the analytical gluing between $V$ and $W$, we make use of the following Kodaira model \cite{Kod63}. 
Consider the elliptic fibration 
$$(\C^*\times \Delta (0, \rho_1))\slash \Z\to \Delta (0, \rho_1),$$ defined by the canonical projection on the quotient space of $\C^*\times \Delta (0, \rho_1)$ with respect to the $\Z$-action given by $n\cdot (w_1, w_2)=(w_1 w_2^n, w_2)$. 
Then, it canonically extends to a  singular elliptic fibration $h \colon W \to \Delta (\rho_1)$, and so we have an identification $W - \Sigma = (\C^* \times \Delta (0, \rho_1))\slash \Z$. The critical point of $h$ is non-degenerate, namely the complex Hessian is of maximal rank, and so $h$ is a genus-1 holomorphic Lefschetz fibration. 
In the following we shall keep the convention of denoting by $(w_1, w_2)$ the usual complex coordinates of $\C^* \times \Delta(\rho_1) \subset \C^2$ when they are referred to $W$ (up to the above identification), and by $(z_1, z_2)$ the usual coordinates of $\C^2$ when they are referred to $V \subset \C^2$.

Now, let us consider the multi-valued holomorphic function $\phi \colon \Delta (0, \rho_1)\to \C ^{\ast }$ defined by 
\begin{eqnarray*}
\phi(w)=\exp \left(\frac{1}{4\pi i}(\log w)^2 - \frac{1}{2} \log w \right). 
\end{eqnarray*}
We denote by $\Phi \colon U \to W$ the holomorphic map defined by 
$$\Phi(z_1, z_2) = [(z_1\, \phi(z_2^{-1}),\, z_2^{-1})],$$ 
where $U \subset \C^* \times \Delta(\rho_1^{-1}, \rho_0^{-1})$ is a certain open subset that will be specified later. Notice that $\Phi$ is single-valued. This depends on the fact that any two branches $\phi_1$ and $\phi_2$ of $\phi$ are related by the formula $\phi_2(w) = w^k \phi_1(w)$ for some $k \in \Z$, which is compatible with the above $\Z$-action. For the purposes of this Section, we take $U = \Delta(1,\rho_2) \times \Delta(\rho_1^{-1}, \rho_0^{-1}) \subset V$.

It follows that $\Phi$ is a biholomorphism between $U \subset V$ and $\Phi(U) \subset W$.

We are now ready to holomorphically glue $V$ and $W$ by identifying the open subsets $U \subset V$ and $\Phi (U) \subset W$ by means of $\Phi$. That is, we define the complex surface $$E = E(\rho_1, \rho_2) = V \cup_{\Phi} W.$$
We consider $V$ and $W$ as open subsets of $E$ via the quotient map. 

By construction, there is a holomorphic map $f \colon E\to \CP^1$ defined by the canonical projection onto the second factor on $V$ and by the elliptic fibaration $h$ on $W$, where $\CP^1$ is regarded as the result of gluing the disks $\Delta(\rho_0^{-1})$ and $\Delta(\rho_1)$ by identifying $\Delta(\rho_1^{-1}, \rho_0^{-1}) \subset \Delta(\rho_0^{-1})$ with $\Delta(\rho_0, \rho_1) \subset \Delta(\rho_1)$ by means of the inversion map $z \mapsto z^{-1}$. 

Notice that the resulting complex surface $E$ does not depend on $\rho_0$, since this parameter determines only the size of the gluing region.

\medskip

\begin{remark}
By taking $\rho'_1$ and $\rho'_2$ such that $\rho_2 < \rho'_2 < (\rho'_1)^{-1} < \rho_1^{-1}$, our construction yields an obvious holomorphic embedding of $E$ in $E' = E(\rho_1', \rho_2')$ as a relatively compact complex domain. The closure $\hat E =\Cl E$ in $E'$ has Levi flat piecewise smooth boundary, and $\Bd \hat E$ is homeomorphic to $S^3$. This agrees with the interpretation of the map $f \colon E \to \CP^1$ we gave in \cite{DKZ17} as the restriction of the Matsumoto-Fukaya fibration $S^4 \to S^2$ \cite{Mat82} to the complement of a neighborhood of the negative critical point in $S^4$. This also relates to the embedded open book decomposition that we construct in Proposition \ref{main thm2}.
\end{remark}


Let $V' = \Delta (1, s)\times \Delta (\rho_1^{-1}, \rho _0^{-1})$, 
where the additional parameter $s$ is chosen so that   
$\rho_0^{-1} < s < \rho_1^{-1} \rho_2$.
Let $U'$ be the subset of $V'$ defined by 
$U' = \{ (z_1, z_2)\in V' \mid |z_2|<|z_1| \}$.
We put $V'' = V \cup V' \subset \C^2$ and identify a point $(z_1, z_2) \in U'$ with $\psi(z_1,z_2)$, where
$\psi \colon U' \to V'$ is the holomorphic embedding defined by $\psi(z_1, z_2) = (z_1z_2^{-1}, z_2)$.
Let $Y = V''/\!\!\sim$ be the quotient.

\begin{proposition}\label{model}
The manifold $Y=V''/\!\! \sim $ is biholomorphic to the preimage of the disk $\Delta (\rho_0^{-1})\subset \CP^1$ by the holomorphic fibration $f \colon E\to \CP^1$. 
\end{proposition}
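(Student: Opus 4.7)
The plan is to define a piecewise holomorphic map $\Psi \colon V'' \to E$ that descends to a biholomorphism $\bar\Psi \colon Y \to f^{-1}(\Delta(\rho_0^{-1}))$. The idea is to extend $\Phi$ to all of $V'$ by the same formula, defining $\tilde\Phi(z_1, z_2) = [(z_1 \phi(z_2^{-1}), z_2^{-1})] \in W$, which is single-valued on $V'$ by exactly the same argument as for $\Phi$ on $U$. Then I set $\Psi|_V$ to be the inclusion $V \hookrightarrow E$ and $\Psi|_{V'} = \tilde\Phi$; these agree on the common part $V \cap V' \subseteq U$ because the gluing of $E$ was performed precisely via $\Phi$.

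To see that $\Psi$ descends to $Y$, one checks it is constant on $\sim$-equivalence classes. For $(z_1, z_2) \in U'$, the representatives of $\tilde\Phi(z_1, z_2)$ and $\Phi(\psi(z_1, z_2))$ in $\C^* \times \Delta(\rho_1)$ differ by exactly one step of the $\Z$-action (the shift by $n = 1$), hence define the same point in $W$. So $\Psi$ descends to a holomorphic $\bar\Psi \colon Y \to E$, with image contained in $V \cup h^{-1}(\Delta(\rho_0, \rho_1)) = f^{-1}(\Delta(\rho_0^{-1}))$. The Jacobian of $(z_1, z_2) \mapsto (z_1 \phi(z_2^{-1}), z_2^{-1})$ is nonvanishing, so $\bar\Psi$ is a local biholomorphism on the $V'$-piece, and trivially so on the $V$-piece.

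The crux, and the main obstacle, is to prove that $\bar\Psi$ is bijective. Surjectivity reduces to showing $\tilde\Phi(V') = h^{-1}(\Delta(\rho_0, \rho_1))$: given $[(w_1, w_2)]$ with $w_2 \in \Delta(\rho_0, \rho_1)$, its preimages in $V'$ are of the form $z_1 = w_1 w_2^n / \phi(w_2)$ for $n \in \Z$, and the inequality $s > \rho_0^{-1} > |w_2|^{-1}$ ensures that the logarithmic interval $(0, \log s)$ is longer than the period $\log|w_2|^{-1}$, so some $n$ lands $|z_1|$ inside $(1, s)$. For injectivity, two points in $V''$ with the same $\Psi$-image must have the form $(z_1, z_2)$ and $(z_1 z_2^{-n}, z_2)$ for some $n \in \Z$; the inequalities $\rho_0 > \rho_1 \rho_2^{-1}$ and $s < \rho_1^{-1} \rho_2 < \rho_1^{-2}$ force $|n| \leq 1$, because for $|n| \geq 2$ the point $z_1 z_2^{-n}$ falls outside $\Delta(1, s)$. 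The case $n = 0$ is the ambient coincidence $V \cap V' \subset \C^2$, while $n = \pm 1$ matches precisely the $\psi$-identifications in $Y$. This combinatorial matching, which depends sensitively on the chain of inequalities among $\rho_0, \rho_1, \rho_2, s$, is where the bulk of the work lies; once it is done, $\bar\Psi$ is a holomorphic bijection that is locally biholomorphic, hence a biholomorphism.
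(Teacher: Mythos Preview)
Your proposal is correct and follows essentially the same approach as the paper: both define $\Psi$ to be the inclusion on $V$ and the (extended) map $\Phi$ on $V'$, then check that this descends to a biholomorphism onto $f^{-1}(\Delta(\rho_0^{-1})) = V \cup W(\rho_0,\rho_1)$. The paper simply declares the well-definedness and bijectivity ``easy to check,'' whereas you supply the details---your surjectivity argument (the interval $(0,\log s)$ is longer than the period $\log|w_2|^{-1}$) and injectivity argument (the bound $s<\rho_1^{-2}$ forces $|n|\le 1$, and $n=\pm1$ is exactly the $\psi$-identification) are both correct.
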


\begin{proof}
The preimage $f^{-1}(\Delta (\rho_0^{-1}))$ is described as follows. 
Let $W(\rho_0, \rho_1)$ be the subset of $W$ given, in the Kodaira model above, by 
$$W(\rho_0, \rho_1)=(\C^{\ast }\times \Delta (\rho_0, \rho_1))\slash \Z = f^{-1}(\Delta (\rho_0, \rho_1)),$$ being $f = h$ in $W(\rho_0, \rho_1)$.
Then, we have $U' \subset W(\rho_0, \rho_1)$, and so  
$$f^{-1}(\Delta (\rho_0^{-1}))= V \cup_{U \sim U'} W(\rho_0, \rho_1).$$

Now, we define a map $\Psi \colon Y\to f^{-1}(\Delta (\rho_0^{-1}))$ by putting $\Psi([(z_1,z_2)])=(z_1, z_2)$ on $V/ \!\!\sim$ and
$\Psi([(z_1,z_2)]) = \Phi(z_1,z_2)$ on $V'/\!\!\sim$.
It is easy to check that $\Psi$ is well-defined and it is a biholomorphism.
\end{proof}

In order to obtain the complement $C \subset E$ of a 4-ball $D$ containing the singular fiber of $f$,
we remove from $Y$ the subset $$Z =  \{ (z_1,z_2) \mid c_1<|z_1|<c_2 \} \subset V,$$ where  $s\rho_1<c_1<c_2<\rho_2$. 
Then, by Proposition \ref{model}, it is enough to set $C = Y - Z$.

\section{The holomorphic open book decomposition}\label{psh}
We briefly recall the notion of open book decomposition of a 3-manifold. For a more thorough treatment, the reader is referred to Ozbagci and Stipsicz \cite[Chapter 9]{OZ04} and to Rolfsen \cite[Chapter 10K]{Ro90}. 

By an {\sl open book decomposition} of a closed, connected, oriented, manifold $M$ of real dimension three, we mean a smooth map $f \colon M \to B^2$ such that the followings hold:

\begin{enumerate}
\item the restriction $f_| \colon \Cl(f^{-1}(\Int B^2)) \to B^2$ is a (trivial) fiber bundle with fiber a link $L = f^{-1}(0)$, called the {\sl binding} of the open book; 
\item the map $\phi\colon M - L \to S^1 = \Bd B^2$ defined by $\phi(x) = f(x)/|f(x)|$ is a fiber bundle.
\end{enumerate}
The closure of every fiber $F_\theta = \Cl(\phi^{-1}(\theta))$, for $\theta \in S^1$, is a compact surface in $M$, called a {\sl page} of the open book, and $\Bd F_\theta = L$.
By a little abuse of terminology, we call pages of $f$ also the surfaces $f^{-1}(\theta)$, for all $\theta \in S^1 = \Bd B^2$. The two kinds of pages are ambient isotopic in $M$ to each other.

Given an open book decomposition $f \colon M \to B^2$, the orientations of $M$ and of $B^2$ induce an orientation on the pages, and hence on the binding $L = \Bd F_\theta$.

To an open book decomposition $f \colon M \to B^2$ it is associated the {\sl monodromy} $\omega_f$ of the bundle $\phi$, which is a diffeomorphism of a page $F_*$ that fixes the boundary pointwise, and it is well defined up to isotopy fixing the boundary.

On the other hand, given an element $\omega$ of the mapping class group $\M_{g,b}$ of a compact, connected, oriented surface $F_{g,b}$ of genus $g\geq0$ and with $b\geq 1$ boundary components, there is an open book decomposition $f_\omega \colon M_\omega \to B^2$ with monodromy $\omega$ and page $F = F_{g,b}$, and this is uniquely determined up to orientation-preserving diffeomorphisms. The construction goes as follows. Take a representative $\psi \colon F \to F$ of the isotopy class $\omega$, and consider the mapping torus $T_\omega = (F \times \R)/\Z$, where the $\Z$-action is generated by the diffeomorphism $\tau \colon F \times \R \to F \times \R$ defined by $\tau(x, t) = (\psi(x), t-1)$.

Let $M_\omega$ be the result of gluing $\Bd F \times B^2$ to $T_\omega$ along the boundary, by means of the obvious identifications $\Bd (\Bd F \times B^2) \cong \Bd F \times S^1 \cong \Bd F \times (\R/\Z) \cong \Bd T_\omega$, where the last identification comes from the fact that $\psi$ is the identity on $\Bd F$. Then, let $f \colon M_\omega \to B^2$ be the canonical projection $\Bd F \times B^2 \to B^2$ on $\Bd F \times B^2 \subset M_\omega$, while it is the projection $T_\omega \to \R /\Z \cong \Bd B^2$ on $T_\omega \subset M_\omega$.

Consider an oriented surface $F$ and let $\gamma \subset \Int F$ be a connected simple curve. A {\sl Dehn twist} $\delta_\gamma \colon F \to F$ about the curve $\gamma$ is a diffeomorphism of $F$ such that away from a tubular neighborhood $T$ of $\gamma$ in $F$, $\delta_\gamma$ is the identity, while in $T \cong S^1 \times [0,1]$ the diffeomorphism $\delta_\gamma$ either corresponds to the map $\delta_- \colon S^1 \times [0,1] \to S^1 \times [0,1]$ defined by $$\delta_-(z, t) = (z\, e^{-2\pi\, i\, t},\, t),$$ or to the map $\delta_+ = \delta_-^{-1}$, where $S^1 \times [0,1]$ is endowed with the product orientation and its identification with $T \subset F$ is orientation-preserving. In the former case, $\delta_\gamma$ is called a {\sl left-handed (or negative) Dehn twist}, while in the latter it is called a {\sl right-handed (or positive) Dehn twist}. By changing the orientation of $F$, the two types of Dehn twists are swapped.

The 3-sphere admits an open book decomposition $h_- \colon S^3 \to B^2$ with binding the negative Hopf link $H_-$, and with page the annulus $S^1 \times [0,1]$. The monodromy is the left-handed Dehn twist about the core circle $\gamma = S^1 \times \left\{\frac{1}{2}\right\}$ of the annulus (there is also the positive version $h_+ \colon S^3 \to B^2$ of this). This is the well-known realization of the (negative) Hopf link in $S^3$ as a fibered link, with page the Hopf band \cite{Ha82}.

The following proposition will be helpful in the proof of Main Theorem. We keep the notations of Section \ref{neighbor}.

\begin{proposition}\label{main thm2}
There is a piecewise smooth embedded 3-sphere $M \subset E$ such that the restriction $f_{|M} \colon M \to B^2$ of the holomorphic map $f \colon E \to \CP^1$, is diffeomorphic to the open book decomposition $h_-$ of $S^3$ described above, with $B^2$ a suitable closed disk in $\Delta(\rho_0^{-1}) \subset \CP^1$. Every page of $f_{|M}$ is a holomorphic annulus in an elliptic fiber of $f$. Moreover,  $M$ is not globally smooth, since it has corners along the two linked tori given by $\Bd f_{|M}^{-1}(\Bd B^2)$, on the complement of which $M$ is foliated by holomorphic curves. Thus, $M$ is Levi flat in $E$.
\end{proposition}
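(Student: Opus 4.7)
The plan is to build $M$ explicitly from the decomposition $E = V \cup_\Phi W$ and then identify $f_{|M}$ with $h_-$ piece by piece. Choose $c_1, c_2$ with $s\rho_1 < c_1 < c_2 < \rho_2$ and a radius $r_* \in (\rho_1^{-1}, \rho_0^{-1})$, take $B^2 = \Delta[r_*] \subset \Delta(\rho_0^{-1})$, and let $Z' = \{(z_1, z_2) \in V : c_1 < |z_1| < c_2,\; |z_2| < r_*\}$. I will define $M$ as the piecewise smooth boundary of the compact region $D = h^{-1}(\Delta[r_*^{-1}]) \cup \Cl(Z')$ of $E$, where the closure is taken inside $E$ via $\Phi$. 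Concretely, $M$ has three smooth pieces joined along the two tori $S^1(c_i) \times \Bd B^2$ ($i = 1, 2$): the two sides $S^1(c_i) \times B^2 \subset V$ of $\Cl(Z')$, and the annular sub-bundle of the $T^2$-bundle $f^{-1}(\Bd B^2) \subset W$ whose fiber over $\theta \in \Bd B^2$ is the closed annulus $f^{-1}(\theta) \setminus \Phi(\{c_1 < |z_1| < c_2\} \times \{\theta\})$.

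The open-book shape of $f_{|M} \colon M \to B^2$ comes out of this construction: the binding $L = f_{|M}^{-1}(0) = (S^1(c_1) \cup S^1(c_2)) \times \{0\}$ is a pair of circles in $f^{-1}(0)$; the preimage $f_{|M}^{-1}(\Int B^2) = L \times \Int B^2$ is the trivial $B^2$-bundle; and for each $\theta \in \Bd B^2$, the page $f_{|M}^{-1}(\theta)$ is a closed annulus in the elliptic fiber $f^{-1}(\theta) \cong T^2$, cut out by a modulus condition on the holomorphic coordinate $w_1$ and hence automatically a holomorphic annulus. The main remaining point is the monodromy: the Kodaira fibration $h$ carries a right-handed Dehn twist around the vanishing cycle $\{|w_1| = \mathrm{const}\}$ as its monodromy circling the critical value counter-clockwise. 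Provided $c_1, c_2$ and $r_*$ are chosen so that the $V$-annulus is a proper sub-annulus of the fundamental domain $\{|w_2| \le |w_1| \le 1\}$ of $\C^* / \langle w_2 \rangle$, each page $P_\theta$ is a tubular neighborhood of a vanishing cycle in $T^2$, and this twist restricts on $P_\theta$ to a Dehn twist of the annulus. Since $\Bd B^2$ is traversed with the opposite orientation to the critical-value loop under the inversion $z_2 \leftrightarrow w_2 = z_2^{-1}$, the induced monodromy on $P_\theta$ is a left-handed Dehn twist, matching $h_-$; uniqueness of the $3$-manifold determined by a page together with its monodromy then forces $M \cong S^3$.

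The corners $\Bd f_{|M}^{-1}(\Bd B^2)$ are precisely the two tori $S^1(c_i) \times \Bd B^2$, linked in $M \cong S^3$ as boundaries of tubular neighborhoods of the Hopf binding $L$. On the complement of the corners, $M$ is smoothly foliated by complex curves: by the pages $P_\theta$ on the annular-bundle side, and by the closed holomorphic disks $\{z_1\} \times \Cl(B^2)$, $|z_1| \in \{c_1, c_2\}$, on the solid-torus side. Both foliations are by $1$-complex-dimensional submanifolds, so the tangent plane to $M$ at every smooth point coincides with its maximal complex subspace, giving Levi flatness. The most delicate step I expect is the monodromy computation: explicit tracking of the multi-valued function $\phi$ of Section \ref{neighbor} around $\Bd B^2$ to verify both that the induced twist is a genuine Dehn twist of the page annulus and that its handedness is left.
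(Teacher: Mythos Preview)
Your construction is essentially the paper's: both build $M$ from two solid tori $S^1(c_i)\times B^2$ sitting in $V$ together with the complementary annulus sub-bundle of the elliptic fibration over the boundary circle, meeting along two corner tori. The paper parametrises the three pieces abstractly as $(\Bd A\times B^2)\cup_\psi(A\times S^1)$ with explicit gluing maps $\psi_1=\id$ and $\psi_2(w_1,w_2)=(w_1w_2,w_2)$, and then writes down an embedding $g\colon M\to E$ using $\Phi$ and the inclusion $j\colon V\hookrightarrow E$; you describe the same $M$ directly as $\Bd D$ for $D=h^{-1}(\Delta[r_*^{-1}])\cup\Cl(Z')$. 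These are two presentations of the same hypersurface.

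The only substantive difference is how the monodromy is pinned down. The paper does it by brute force: it writes an explicit fibred diffeomorphism $k\colon T(\delta)\to A\times S^1$, $k([(z,t)])=(z\,e^{2\pi i\,\tau(|z|)(t-1)},e^{2\pi i t})$, checks that under $k$ the gluings $\psi_1,\psi_2$ become the identity, and then conjugates $\delta(z)=z\,e^{2\pi i\,\tau(|z|)}$ by the orientation-preserving map $q(z)=(\bar z/|z|,\tau(|z|))$ to recognise $\delta_-$. Your argument (right-handed Dehn twist from the Kodaira model, flipped to left-handed by the inversion $z_2\mapsto z_2^{-1}$) is the conceptual reason behind this computation and is correct in outline; as you already note, making it rigorous amounts to tracking the branch behaviour of $\phi$ around $\Bd B^2$, which is exactly what the paper's explicit gluing map $\psi_2$ encodes. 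One small caution: the phrase ``this twist restricts on $P_\theta$'' is not quite the right picture---the torus Dehn twist need not literally preserve the page---what you are really computing is the return map of the annulus sub-bundle relative to the $V$-framing of its boundary, and that is what comes out as $\delta_-$.
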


We endow $M$ with the orientation determined by the open book decomposition, where the pages are oriented by the induced complex structure, and the base disk $B^2$ takes the orientation from $\CP^1$. By construction, this disk is in the part of $\CP^1$ that corresponds, via the map $f$, to the Stein open subset $V \subset E$, with the boundary in the gluing region.

Fix two numbers $c$ and $\epsilon$ such that $\rho_0 < c < \rho_1$ and $0 < \epsilon < (\rho_0 - \rho_1 \rho_2^{-1})/2$. We put $a = \rho_2 - \epsilon$ and $b = c^{-1} + \epsilon$, and let $A = \Delta[a, b]$. It is then straightforward to check that  $(\lambda^k A) \cap A = \emptyset$ for all $\lambda \in \Delta[c, \rho_1]$ and for all $k \in \Z$.

\begin{proof}
Consider the set $$G = f^{-1}(S^1(c)) - \Phi(\Delta(bc, a) \times S^1(c^{-1})) \subset E,$$ with $S^1(c) \subset \Delta(\rho_1) \subset \CP^1$. The map $f_G = f_{|G} \colon G \to S^1(c) \cong S^1$ is a compact annulus bundle over the circle $S^1(c) \subset \Delta(\rho_1) \subset \CP^1$ of radius $c$. Here $S^1(c)$ has the clockwise orientation in the disk $\Delta(\rho_1)$, namely it is oriented as the boundary of the disk it bounds in $\Delta(\rho_0^{-1}) \subset \CP^1$. This choice depends on the inversion in the map $\Phi'$ below.

This bundle is trivial, and a trivialization is provided by the map $\Phi' \colon A \times S^1 \to G$ that is defined by $$\Phi'(w_1, w_2) = \Phi(w_1, c^{-1} w_2) = [(w_1\, \phi(c\, w_2^{-1}),\, c\, w_2^{-1})].$$ Notice that $\Phi'$ is holomorphic on every fiber.

Now, we construct an open book decomposition of $S^3$ embedded in $E$. We begin with an abstract description of this open book, and then we see how it is embedded in $E$.

Let $\psi_1$ be the identity map of $S^1(a) \times S^1$, and let $$\psi_2 \colon S^1(b) \times S^1 \to S^1(b) \times S^1$$ be defined by $\psi_2(w_1, w_2) = (w_1 w_2, w_2)$.

We use the diffeomorphism $\psi = \psi_1 \cup \psi_2 \colon \Bd(\Bd A \times B^2) \to \Bd(A \times S^1)$ to construct the oriented 3-manifold $$M = (\Bd A \times B^2) \cup_\psi (A \times S^1)$$ obtained by gluing $\Bd A \times B^2$ to $A \times S^1$ along the boundary (these two pieces are oriented in the canonical way).

Let $p \colon M \to B^2$ be defined by
$p(w_1, w_2) = w_2$, for $(w_1, w_2) \in \Bd A \times B^2$ or $(w_1, w_2) \in A \times S^1$.
It is clear that $(M, p)$ is an open book decomposition of $M$ with binding $L = \Bd A \times \{0\} \subset \Bd A \times B^2 \subset M$ and the annulus $A$ as the page.

Now, we show that the monodromy of $p$ is the diffeomorphism
$\delta \colon A \to A$ defined by $$\delta(z) = z\, e^{2\pi\, i\, \tau(|z|)},$$ where $\tau \colon [a, b] \to [0,1]$ is an increasing diffeomorphism (for example, the affine one). Thus, $\delta$ is the identity on $\Bd A$.
Let $$T(\delta) = \frac{A \times [0,1]}{(z,1) \sim (\delta(z), 0)}$$ be the mapping torus of $\delta$.

The open book decomposition with page $A$ and monodromy $\delta$ represents a 3-manifold $B(\delta)$ obtained by capping off $T(\delta)$ with $\Bd A \times B^2$ glued along the boundary by the identity, up to the obvious identification $\Bd B^2 = S^1 \cong [0,1]/(0\sim 1)$.

Define the map $k \colon T(\delta) \to A \times S^1$ by setting $$k([(z,t)]) = (z\, e^{2\pi\, i\, \tau(|z|)\, (t-1)},\, e^{2\pi\, i\, t}).$$ Then, $k$ is an orientation-preserving fibered diffeomorphism. 

The gluing maps $\psi_1$ and $\psi_2$ used for building $M$ correspond, by means of $k$, to the identity of $\Bd (T(\delta)) = \Bd A \times S^1$. This implies that there is a diffeomorphism $M \cong B(\delta)$, with respect to which the open book $p$ corresponds to that of $B(\delta)$, and so $\delta$ is the monodromy of $p$.

In order to understand $\delta$, we consider the diffeomorphism $q \colon A \to S^1 \times [0,1]$ defined by $$q(z) = (\bar z / |z|,\, \tau(|z|)).$$ This is orientation-preserving, as it can be easily shown by writing $q$ in polar coordinates. Moreover, $q^{-1}(w, t) = \tau^{-1}(t)\, \bar w$.

It is now straightforward to prove the identity $\delta_- = q \circ \delta\circ q^{-1}$, where $\delta_-$ is the left-handed Dehn twist defined above.
Therefore, $\delta$ is a left-handed Dehn twist of $A$ about the curve $\gamma \subset A$ of equation $\tau(|z|) = 1/2$ (that is, the core of $A$). It follows that $p \colon M \to B^2$ is equivalent to the open book $h_-$ of $S^3$, and in particular $M \cong S^3$.

Next, we define an embedding $g \colon M \to E$ in the following way
$$g(z_1, z_2) =
	\begin{cases}
		\Phi'(z_1, z_2), & \text{for } (z_1, z_2) \in A \times S^1\\
		j(z_1, c^{-1} z_2), & \text{for } (z_1, z_2) \in S^1(a) \times B^2\\
		j(c z_1, c^{-1} z_2), & \text{for } (z_1, z_2) \in S^1(b) \times B^2,
	\end{cases}$$
where $j \colon V \hookrightarrow E$ is the inclusion map.

We show that $g$ is well-defined. For $(z_1, z_2) \in S^1(a) \times S^1$, we have $$g(z_1, z_2) = j(z_1, c^{-1} z_2) = \Phi(z_1, c^{-1} z_2) = [(z_1\, \phi(c\, z_2^{-1}),\, c\, z_2^{-1})] = (\Phi'\circ \psi_1)(z_1, z_2).$$
Finally, we check consistency at $(z_1, z_2) \in S^1(b) \times S^1$. We have
\begin{equation*}
\begin{split}
	g(z_1, z_2) & = j(c\, z_1, c^{-1} z_2) = \Phi(c\, z_1, c^{-1} z_2) = [(c\, z_1\, \phi(c\, z_2^{-1}), c\, z_2^{-1})]= \\
	& = [(z_1 z_2\, \phi(c\, z_2^{-1}), c\, z_2^{-1})] = (\Phi'\circ \psi_2)(z_1, z_2),
\end{split}
\end{equation*}
where we are using the $\Z$-action considered in Section \ref{neighbor}.

By abusing notation, we still denote by $M \subset E$ the image of $g$.
Therefore, $M$ is a piecewise smooth embedded submanifold of $E$, although it is not globally smooth. Indeed, the two codimension-0 submanifolds of $E \cong \R^4$ bounded by $M$ have corners along $\Bd A \times S^1 \subset M$. Away from the corners, $M$ is foliated by holomorphic curves, hence it is Levi flat. These holomorphic curves are the images of the disks $\{z_1\} \times B^2$ and the images of the annuli $A \times \{z_2\}$ by the embedding $g$, with $(z_1, z_2) \in \Bd A \times S^1$.
\end{proof}

Let $D \subset E$ be the compact submanifold bounded by $M$, and let $C$ be the non-compact one. Hence, $E = D \cup_M C$.

The argument based on Kirby calculus in \cite{DKZ17} proves the following proposition.
\begin{proposition}
Up to smoothing the corners, $D$ is diffeomorphic to $B^4$ and $C$ is diffeomorphic to $S^3 \times (0, 1]$.
\end{proposition}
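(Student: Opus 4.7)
The plan is to reduce both assertions to the Kirby-calculus analysis carried out in \cite{DKZ17}, where $E$ is shown to be diffeomorphic to $\R^4$ by an explicit sequence of handle cancellations.

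First, I would extract from Proposition \ref{main thm2} a handle decomposition of $D$. Since $D$ contains the nodal singular fiber $\Sigma \subset W$, a neighborhood of $\Sigma$ in $W$ carries the standard Lefschetz handle decomposition for a genus-$1$ fibration with one nodal fiber: a $0$-handle, two $1$-handles forming a once-punctured torus, and a $2$-handle attached along the vanishing cycle with framing $-1$. The remaining part of $D$ lies in $V$ and is bounded on $M$ by the two solid tori $S^1(a) \times B^2(c^{-1})$ and $S^1(cb) \times B^2(c^{-1})$, together with the annular piece $A \times S^1$ embedded in $W$ via $\Phi'$. Tracking the attaching circles through the change of coordinates $\Phi$ and the reparametrizations $\psi_1, \psi_2$ of Proposition \ref{main thm2} produces a Kirby diagram that, up to isotopy and smoothing of the corners along the linked tori $\Bd A \times S^1$, matches the compact domain analyzed in \cite{DKZ17}. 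Applying the sequence of handle slides and cancellations given there collapses the diagram to a single $0$-handle, so $D$ is diffeomorphic to $B^4$ after smoothing.

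For the complement, I would appeal to the exhausting family $\{M_\tau\}_{\tau \in (0,1)}$ of embedded $3$-spheres in $E$ provided by Lemma \ref{main thm1}, choosing parameters so that $M$ is isotopic to some $M_{\tau_0}$. Each $M_\tau$ bounds, by the same Kirby argument, a smooth $4$-ball $D_\tau$, and these exhaust $E$ as $\tau \to 0$. Since the $M_\tau$ arise as regular level sets of a strictly plurisubharmonic function defined on the complement of $D$, the gradient flow of such a function gives a smooth trivialization $C \cong S^3 \times (0,1]$.

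The principal obstacle is the Kirby-calculus step itself: writing down the framed attaching circles of the $2$-handles coming from the gluing map $\Phi$ and from the Lefschetz singularity, and then carrying out the explicit sequence of handle slides and cancellations reducing the diagram to the empty one. This computation is the substance of \cite{DKZ17} and I would cite it rather than reproduce it. The smoothing of the corners of $M$ along $\Bd A \times S^1$ is a routine operation that does not affect the diffeomorphism type.
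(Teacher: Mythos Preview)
Your approach is correct and matches the paper's, which simply cites the Kirby-calculus argument of \cite{DKZ17} for both assertions and then remarks that the strictly plurisubharmonic function constructed in the next section gives an alternative proof. Your treatment is more detailed but rests on exactly these two ingredients; the forward reference to Lemma~\ref{main thm1} for the trivialization of $C$ is one the paper itself makes.
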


The same conclusion follows from the existence of a proper continuous function $u \colon C \to (0,1]$, which is smooth, regular and strictly plurisubharmonic in $\Int C$. In the next section, we show the existence of such a function to prove Main Theorem.

\section{The key Lemma and the proof of the Main Theorem}\label{contact}
In this section we prove the following lemma, which is needed for the proof of our Main Theorem.

\begin{lemma}\label{main thm1}
There exists a smooth 3-sphere $M_1 \subset E$ such that: \(1\) the non-compact submanifold $C_1 \subset E \cong \R^4$ bounded by $M_1$, admits a proper smooth regular strictly plurisubharmonic function $u \colon C_1 \to (0, 1]$, and \(2\) the complement $E - C_1$ is of Calabi-Eckmann type.
\end{lemma}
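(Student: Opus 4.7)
Plan. I construct the desired $M_1$ and $u$ as respectively a specific member and a convex reparameterization of the parameter of a smooth one-parameter family $\{M_\tau\}_{\tau\in(0,1]}$ of smooth $3$-spheres foliating the non-compact region $C_1$. Each $M_\tau$ is to be obtained by smoothly pushing the Levi-flat sphere $M$ of Proposition~\ref{main thm2} into the side $C$, so that it is strictly pseudoconvex from the $C$-side (equivalently, strictly pseudoconcave from the $D$-side).

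In the $V$-chart I would replace the Levi-flat cylindrical pieces $T_a = \{|z_1|=a,\, |z_2|\le c^{-1}\}$ and $T_{cb} = \{|z_1|=cb,\, |z_2|\le c^{-1}\}$ of $M$ by smooth hypersurfaces of the form $\{|z_1|^2 = h^\tau_a(|z_2|^2)\}$ and $\{|z_1|^2 = h^\tau_{cb}(|z_2|^2)\}$, with $h^\tau_a$ smooth strictly increasing and $h^\tau_{cb}$ smooth strictly decreasing. Taking the defining function $\rho = h(|z_2|^2) - |z_1|^2$, which is negative on the $C$-side $|z_1|^2 > h$, a direct computation gives the Levi form on the complex tangent as
\begin{equation*}
h'(|z_2|^2) + h''(|z_2|^2)\,|z_2|^2 - \frac{|h'(|z_2|^2)|^2\,|z_2|^2}{h(|z_2|^2)},
\end{equation*}
which for the simple choice $h^\tau_a(r) = a^2 + \epsilon_\tau\,r$ with $\epsilon_\tau > 0$ reduces to $\epsilon_\tau a^2/(a^2 + \epsilon_\tau r) > 0$, yielding strict pseudoconvexity from the $C$-side; an analogous choice handles $h^\tau_{cb}$. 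In the $W$-chart I push the holomorphic annulus-bundle $g(A\times S^1) \subset f^{-1}(S^1(c))$ toward the region $|w_2| > c$ driven by a profile function strictly plurisubharmonic in $w_2$; since each page is a complex curve, the Levi form has no tangential contribution along the fibre direction, so positivity in the transverse $w_2$-direction suffices.

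The delicate step is the smooth gluing of the $V$- and $W$-perturbations across the two corner tori $\{|z_1|=a,\,|z_2|=c^{-1}\}$ and $\{|z_1|=cb,\,|z_2|=c^{-1}\}$ of $M$. I would carry it out by a convex combination controlled by a partition of unity in the base parameter $|z_2|^2 = |w_2|^{-2}$, using the openness of strict pseudoconvexity together with the freedom to shrink the interpolation scale to preserve the Levi-form inequality. The parameter $\tau$ is arranged so that $\tau = 1$ corresponds to a small perturbation of $M$ and $\tau \to 0$ pushes $M_\tau$ out of every compact subset of $E$ through the ends of $V$ (where $|z_1|\to 1$ or $\rho_2$, or $|z_2|\to\rho_0^{-1}$), giving a smooth foliation of $C_1$ by $3$-spheres and making the parameter proper.

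Setting $u(p) = \tau$ for $p\in M_\tau$ then defines a smooth regular function on $C_1$ with image $(0,1]$ and $u^{-1}(1) = M_1$. To upgrade the tangential strict pseudoconvexity of each level set to strict plurisubharmonicity of $u$ in \emph{all} complex directions, I would apply a sufficiently convex reparameterization $u \mapsto f \circ u$: on the complex tangent to $M_\tau$ the term $f''(u)\,\partial u \wedge \bar\partial u$ vanishes (since $\partial u = 0$ there) and $f\circ u$ inherits positivity from each $M_\tau$, while on the complex normal direction this extra term is positive of magnitude proportional to $f''$, dominating any negative contribution of $f'(u)\,\partial\bar\partial u$ once $f$ is convex enough. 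For statement~(2), $M_1$ being close to $M$ makes $E - C_1$ contain the compact $4$-ball $D\subset E$ bounded by $M$, hence the singular fiber and all nearby smooth elliptic fibers of $f$; each such smooth fiber is a holomorphic embedding of a compact complex $1$-manifold into $E - C_1 \cong B^4$, null-homotopic by contractibility, yielding the Calabi-Eckmann condition. The main obstacle I expect is verifying strict pseudoconvexity across the corner interpolation in the $W$-chart, where the non-trivial Kodaira monodromy (encoded by the multi-valued $\phi$) complicates explicit Levi-form estimates.
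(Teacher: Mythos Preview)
Your overall strategy---perturb the Levi-flat sphere $M$ into a smooth family of strictly pseudoconcave spheres foliating the outside, then convexify the leaf parameter to obtain a strictly plurisubharmonic function---matches the paper's. Your treatment of the two solid-torus pieces in the $V$-chart is essentially the paper's (the paper writes them as $|z_1|=f_i(|z_2|)$ and uses the criterion that $\log f_i(e^x)$ be strictly convex/concave, which is equivalent to your direct Levi-form computation), and your final reparameterization step is exactly the content of the paper's auxiliary lemma on upgrading contact level sets to strict plurisubharmonicity.

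The genuine gap is in the middle piece. Your sentence ``since each page is a complex curve, the Levi form has no tangential contribution along the fibre direction, so positivity in the transverse $w_2$-direction suffices'' has the logic inverted. The Levi form of a real hypersurface is evaluated \emph{only} on the complex tangent; if the hypersurface still contains the holomorphic pages (i.e.\ if you merely push $g(A\times S^1)$ to a nearby level $|w_2|=c'$ or, more generally, to a level of any function of $w_2$ alone), then at each point the complex tangent is the tangent to the page and the Levi form vanishes identically. ``Positivity in the transverse direction'' is irrelevant. So your $W$-chart perturbation, as described, produces a Levi-flat middle piece, and the partition-of-unity interpolation then attempts to glue a strictly pseudoconvex piece to a Levi-flat one, which does not yield strict pseudoconvexity no matter how thin the transition collar.

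The paper's resolution is precisely the point you flag as the ``main obstacle.'' Rather than working in the Kodaira $W$-chart, it passes to the model $Y=V''/\!\!\sim$ of Proposition~\ref{model}, where the monodromy is encoded by the affine identification $\psi(z_1,z_2)=(z_1 z_2^{-1},z_2)$. In the $V'$-coordinates $(w_1,w_2)$ (with $w_2=z_2^{-1}$ near $H_1$ and $(w_1,w_2)=(z_1 z_2, z_2^{-1})$ near $H_2$), the two solid-torus boundaries become graphs $|w_2|=h_1(|w_1|)$ and $|w_2|=h_2(|w_1|)$ with $\log h_i(e^x)$ strictly convex. The crucial extra input---your proposal has no analogue of it---is a slope condition imposed on $f_1,f_2$ at $|z_2|=\rho_1^{-1}$ (namely $g_1'(-\log\rho_1)>0$ and $g_2'(-\log\rho_1)<-1$) which, after the coordinate change, forces the endpoint slopes of $\log h_1(e^x)$ and $\log h_2(e^x)$ to have opposite signs. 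This is exactly what allows a single strictly convex function $\widetilde h$ to interpolate between them, giving a middle piece $S=\{|w_2|=h(|w_1|)\}$ that is strictly pseudoconcave by the same log-convexity criterion. No partition-of-unity argument is needed: the three pieces $H_1$, $S$, $H_2$ fit together as a single $C^\infty$ graph in the annular chart.
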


\begin{remark}
Property \(1\) of the Lemma implies that $M_1$ is smoothly standard in $E$, meaning that there exists a diffeomorphism $E \to \R^4$ mapping $M_1$ to the standard unit sphere $S^3$. Therefore, $C_1 \cong S^3\times (0, 1]$ and $D_1 = E - \Int C_1 \cong B^4$.
\end{remark}

Lemma \ref{main thm1} follows from the construction of $C$ in Section \ref{neighbor} and the following lemma. 
We use the notations  
$A_k=\left\{(r_k, \theta _k) \mid 1<r_k<a_k \right\}$, for $k=1, 2$, where $(r_k, \theta _k)$ denote the polar coordinates on $\C $. 
Let $J$ be the standard complex structure on the product $A_1\times A_2$.

\begin{lemma}\label{V_2}
Let $p \colon (1,a_1)\to (1, a_2)$ be a smooth function. 
Then, the hypersurface $S= \{(z_1,z_2) \mid |z_2|=p(|z_1|) \}$ of $A_1\times A_2$ is diffeomorphic to $T^2\times (0,1)$. Moreover, the complex tangencies define the positive (resp. negative) standard contact structure on $S$ if and only if the function $\log p(e^x)$ is strictly concave (resp. strictly convex). 
\end{lemma}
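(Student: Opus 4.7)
The plan is to introduce the defining function $\phi(z_1,z_2) = \log|z_2| - g(\log|z_1|)$ for $S$, where $g(x) = \log p(e^x)$, and to read off the induced contact form from the restriction of $d^c\phi$ to $S$. The diffeomorphism $S \cong T^2 \times (0,1)$ is immediate from the parametrization
$$(u_1,\theta_1,\theta_2) \in (0,\log a_1) \times T^2 \;\longmapsto\; (e^{u_1+i\theta_1},\, p(e^{u_1})\,e^{i\theta_2}),$$
which identifies $u_1 = \log r_1$ as the parameter transverse to the torus factor, so $\phi = u_2 - g(u_1)$ in these coordinates.

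For the contact-theoretic part, I would use $dz_k/z_k = du_k + i\,d\theta_k$ to obtain
$$\partial\phi \,=\, \tfrac{1}{2}\,\tfrac{dz_2}{z_2} \,-\, \tfrac{g'(u_1)}{2}\,\tfrac{dz_1}{z_1},$$
whence $d^c\phi = i(\bar\partial - \partial)\phi = d\theta_2 - g'(u_1)\,d\theta_1$. Writing $\alpha = (d^c\phi)|_S$, a short calculation in the coordinates $(u_1,\theta_1,\theta_2)$ gives
$$d\alpha \,=\, -g''(u_1)\,du_1 \wedge d\theta_1, \qquad \alpha \wedge d\alpha \,=\, -g''(u_1)\,du_1 \wedge d\theta_1 \wedge d\theta_2.$$
Hence the field of complex tangencies $\xi = \ker\alpha$ is a contact structure precisely when $g''$ is nowhere zero, i.e.\ when $g$ is either strictly convex or strictly concave throughout $(0,\log a_1)$.

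Finally, I would fix the orientation on $S$ inherited from the product $T^2 \times (0,1)$; a short check with the outward normal $\partial_{u_2}$ shows this coincides with the boundary orientation of the subdomain $\{|z_2| \leq p(|z_1|)\} \subset A_1 \times A_2$. With this orientation, $\alpha \wedge d\alpha > 0$ exactly when $g''(u_1) < 0$, giving the positive contact structure, while $g''(u_1) > 0$ yields $\alpha \wedge d\alpha < 0$, the negative one. The ``standard'' qualifier is justified by observing that on each torus $\{u_1 = \mathrm{const}\}$ the characteristic foliation of $\xi$ is the linear foliation of slope $g'(u_1)$, which varies monotonically in $u_1$ precisely when $g''$ has constant sign: this is the normal form for the universally tight contact structures on $T^2 \times I$.

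The main obstacle is really just the bookkeeping of sign conventions, namely the choice of $d^c = i(\bar\partial - \partial)$ versus its opposite, and which side of $S$ is taken as ``outward''. Once these are pinned down, the identification of strict concavity (resp.\ convexity) of $g$ with the positive (resp.\ negative) standard contact structure reduces to the one-line computation of $\alpha \wedge d\alpha$ above.
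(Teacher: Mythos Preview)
Your proof is correct and follows essentially the same approach as the paper: both compute a $1$-form annihilating the complex tangencies, identify the characteristic foliation on each torus $\{r_1=\text{const}\}$ as linear of slope $g'(u_1)=r_1 p'(r_1)/p(r_1)$, and conclude that contactness with the correct sign is equivalent to strict concavity/convexity of $g$. The only cosmetic difference is that you obtain $\alpha$ via $d^c$ of the defining function $\phi=u_2-g(u_1)$ and then compute $\alpha\wedge d\alpha$ directly, whereas the paper writes down the complex tangent vectors $v,\,Jv$ by hand and argues through monotonicity of the slope; your packaging is slightly more streamlined but the content is the same.
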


\begin{proof}
It is obvious that the following map is a diffeomorphism between $S$ and $T^2\times (1, a_1)$: 
$$S\to T^2\times (1, a_1);\quad (z_1,z_2)\mapsto (\theta _1, \theta _2, r_1).$$
Now, we describe the complex tangency $\xi =TS\cap JTS$. 
Since the two vector fields 
\begin{eqnarray*}
v=\frac{\partial}{\partial r_1} + p'(r_1) \frac{\partial}{\partial r_2} \quad \text{and} \quad
Jv=\displaystyle \frac{1}{r_1} \frac{\partial}{\partial \theta _1} +\frac{p'(r_1)}{p(r_1)} \frac{\partial}{\partial \theta _2}
\end{eqnarray*}
are tangent to the submanifold $S$, the complex tangency is given by $\xi =\vspan\{v, Jv\}$. 
Hence, $\xi $ is described as the kernel of the $1$-from $\alpha $ on $S\cong T^2\times (1, a_1)$ given by $$\alpha = r_1 p'(r_1)\, d\theta _1-p(r_1)\, d\theta _2.$$
This $1$-form gives a linear characteristic foliation with slope $\frac{r_1p'(r_1)}{p(r_1)}$ on the pre-Lagrangian torus $\left\{r_1=\text{const} \right\}$.
The property of being a contact form is equivalent to the strictly concavity (strictly convexity) of $\log p(e^x)$. 
Indeed, the sign of the second derivative  
$$(\log p(e^x))''= \frac{e^x((p'(e^x)+e^{x} p''(e^x)) p(e^x) - e^{x} p'(e^x)^2)}{p(e^x)^2}$$
corresponds to that of the derivative of the slope of the characteristic foliation 
$$\left(\frac{r_1p'(r_1)}{p(r_1)}\right)' = \frac{(p'(r_1) + r_1 p''(r_1))p(r_1) - r_1 p'(r_1)^2}{p(r_1)^2}.$$ 
This completes the proof.  
\end{proof}

We note that the graph of the function $y = \log {p(e^x)}=({\exp ^{-1}}\circ p \circ \exp ) (x)$ is equal to 
the subset $\left\{(\log X, \log Y) \mid Y=p(X)  \right\}\subset \R^2$. 
Notice also that Lemma~\ref{V_2} is a special case of the following well-known fact. 

\begin{proposition}
Let $\Omega =\left\{ (z_1,z_2) \mid |z_2|<\exp(-\psi (z_1) )  \right\} \subset \C^2$. 
Then the following two conditions are equivalent. 
\begin{enumerate}
\item 
$\partial \Omega $ is strictly pseudoconvex $($resp. strictly pseudoconcave$)$. 
\item
$\psi$ $($resp. $-\psi $$)$ is a strictly plurisubharmonic function.
\end{enumerate}
\end{proposition}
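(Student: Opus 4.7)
The plan is to produce a defining function adapted to the Hartogs form of $\Omega$ and compute its Levi form explicitly. Since $|z_2| < \exp(-\psi(z_1))$ is equivalent to $\log|z_2| + \psi(z_1) < 0$, I would take
$$\rho(z_1, z_2) = \log|z_2| + \psi(z_1)$$
as a smooth defining function on $\C \times \C^{*}$, noting that $d\rho \neq 0$ on $\partial \Omega$ because $\partial \rho/\partial z_2 = 1/(2z_2) \neq 0$ there. The crucial observation is that $\log|z_2| = \tfrac{1}{2}\log(z_2 \bar z_2)$ is pluriharmonic, so every entry of the complex Hessian of $\rho$ that involves a derivative in $z_2$ or $\bar z_2$ vanishes. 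The only surviving entry is
$$\frac{\partial^2 \rho}{\partial z_1\, \partial \bar z_1} = \frac{\partial^2 \psi}{\partial z_1\, \partial \bar z_1}.$$

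Next I would identify the one-dimensional complex tangent bundle $T^{1,0}\partial \Omega = \ker \partial \rho$. From the expression $\partial \rho = (\partial \psi/\partial z_1)\, dz_1 + (1/2z_2)\, dz_2$, a generator is given by the vector
$$v = \Bigl(1,\ -2 z_2\,\frac{\partial \psi}{\partial z_1}\Bigr).$$
Evaluating the Levi form on $v$ picks up only the upper-left entry of the Hessian computed above, so
$$L_\rho(v, \bar v) = \frac{\partial^2 \psi}{\partial z_1\, \partial \bar z_1}.$$

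From this identity the equivalence is immediate. The boundary $\partial \Omega$ is strictly pseudoconvex if and only if $L_\rho(v,\bar v) > 0$, if and only if $\psi$ is strictly subharmonic in $z_1$, which in complex dimension one is the same as being strictly plurisubharmonic. For the pseudoconcave case, $-\rho$ serves as a defining function for the complement of $\overline\Omega$, its Levi form is $-L_\rho$, and so strict pseudoconcavity of $\partial \Omega$ is equivalent to strict plurisubharmonicity of $-\psi$.

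The computation is entirely elementary; the only point that requires real care is the bookkeeping of the sign convention for pseudoconvex versus pseudoconcave, which amounts to the observation that swapping the roles of $\Omega$ and its complement swaps the sign of any defining function and hence of the Levi form restricted to the complex tangency. A secondary sanity check is that the ansatz $\rho = \log|z_2| + \psi(z_1)$ recovers, via the change of variables $r_1 = |z_1|$, $r_2 = |z_2|$, the (strict) convexity/concavity criterion of Lemma~\ref{V_2} for the special case $\psi(z_1) = -\log p(|z_1|)$, providing a consistency check with the coordinate formulation used earlier.
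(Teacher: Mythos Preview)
Your argument is correct. The paper itself does not prove this proposition; it is simply recorded there as a ``well-known fact'' of which Lemma~\ref{V_2} is a special case, so there is no original proof to compare against. Your choice of defining function $\rho(z_1,z_2)=\log|z_2|+\psi(z_1)$ is exactly the standard one for a Hartogs-type domain, and the key simplification---that the pluriharmonicity of $\log|z_2|$ kills all Hessian entries except $\partial^2\psi/\partial z_1\,\partial\bar z_1$---is handled cleanly. The identification of the complex tangent line and the evaluation of the Levi form on it are both right, as is the sign bookkeeping for the pseudoconcave case. Your closing consistency check with Lemma~\ref{V_2} is also on the mark: setting $\psi(z_1)=-\log p(|z_1|)$ and computing $4|z_1|^2\,\partial^2\psi/\partial z_1\,\partial\bar z_1 = -(\log p(e^x))''$ recovers the concavity/convexity dichotomy stated there.
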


In the following, we construct a strictly pseudoconcave hypersurface $M_1$ which is a perturbation of 
the holomorphic open book $M$. 

\begin{proof}[Proof of Lemma \ref{main thm1}]
Let $f_1:[0,\rho_1^{-1}]\to \R_+$ and $f_2:[0,\rho_1^{-1}]\to \R_+$ be smooth positive functions satisfying the following conditions: 
\begin{enumerate}
\item $f_1(x)=c_1+\epsilon x^2$, $f_2(x)=c_2-\epsilon x^2$ near $x=0$ ($1<c_2<s\rho_1<c_1<\rho_2$); 
\item $g_1(x) =\log f_1(e^x)$ is strictly convex and $g_2(x) =\log f_2(e^x)$ is strictly concave; 
\item the derivatives satisfy $g_1'(-\log \rho_1)>0$ and $g_2'(-\log \rho_1)<-1$. 
\end{enumerate}
Recall that $(z_1, z_2)$ are the coordinates on $V=\Delta (1, \rho_2)\times \Delta (\rho_0^{-1}) \subset \C^2$. 
By Lemma \ref{V_2}, the hypersurfaces $H_1$ and $H_2$ in $V$ defined, respectively, by $|z_1|=f_1(|z_2|)$ and $|z_1|=f_2(|z_2|)$ are solid tori with the standard contact structures induced by the complex tangencies. 
Now we retake the coordinates $(w_1, w_2)$ on $V'$ so that $(w_1, w_2)=(z_1,z_2^{-1})$. 
Then, near $H_1$, the coordinates transformation between $V$ and $V'$ is $(w_1, w_2)=(z_1,z_2^{-1})$, 
and near $H_2$, it is $(w_1,w_2)=(z_1z_2, z_2^{-1})$ taking the identification $\psi : U'\to V'$ into account. 
Thus, we obtain the functions $h_1$ and $h_2$ satisfying
\begin{align*}
|z_1| &= f_1(|z_2|) \Leftrightarrow |w_2|=h_1(|w_1|),\quad \text{with } (w_1,w_2)=(z_1,z_2^{-1}),\\ 
|z_1| &= f_2(|z_2|) \Leftrightarrow |w_2|=h_2(|w_1|),\quad \text{with } (w_1,w_2)=(z_1z_2, z_2^{-1}). 
\end{align*}
Since the hypersurfaces $H_1$ and $H_2$ are strictly pseudoconcave, the functions $\log h_1(e^x)$ and $\log h_2(e^x)$ are strictly convex by Lemma~\ref{V_2}. Moreover, by the conditions $g_1'(-\log \rho_1)>0$ and $g_2'(-\log \rho_1)<-1$, 
their differential coefficients at $|w_2|=\rho_1$ are negative and positive, respectively.  
This is verified by the following argument. 

As in the proof of Lemma~\ref{V_2}, the differential coefficient $g_1'(-\log \rho_1)$ (resp. $g_2'(-\log \rho_1)$) represents the slope at $r_1=\rho_1^{-1}$ of the linear characteristic foliation on the $(\theta_1\theta_2)$-torus of the contact structure on $H_1$ (resp. $H_2$). 
Now we put $\phi _1=\arg w_1$ and $\phi _2=\arg w_2$. 
Then the coordinates transformation between $(\theta _1, \theta _2)$ and $(\phi_1, \phi_2)$ is given by 
\begin{align*}
\begin{pmatrix}\phi_1\\ \phi_2\end{pmatrix}
=\begin{pmatrix}1&0\\0&-1\end{pmatrix} 
\begin{pmatrix}
\theta_1 \\
\theta_2 
\end{pmatrix} \quad \text{near } H_1, \\ 
\begin{pmatrix}\phi_1\\ \phi_2\end{pmatrix}
=\begin{pmatrix}1&1\\0&-1\end{pmatrix}
\begin{pmatrix}
\theta_1 \\
\theta_2 
\end{pmatrix} \quad \text{near } H_2. 
\end{align*}
Since the differential coefficient of $\log h_1(e^x)$  (resp. $\log h_2(e^x)$) at $|w_2|=\rho_1$ is the slope of the linear characteristic foliation on the $(\phi_1\phi_2)$-torus, it is easily checked to be negative (resp. positive). 

Hence, there exists a smooth function $h$ connecting $h_1$ and $h_2$ such that $\log h(e^x)$ is strictly convex. 
Indeed, we can take a strictly convex function $\widetilde {h}$ connecting $\log h_1(e^x)$ and $\log h_2(e^x)$,
and then we can define $h ={\exp} \circ {\widetilde{h}} \circ {\exp ^{-1}}$.  
Then, the hy\-per\-sur\-face $S$ defined by $|w_2|=h(|w_1|)$ is $T^2\times I$ with the standard contact structure by Lemma \ref{V_2}. 
The union of $H_1$, $H_2$ and $S$ is a hypersurface diffeomorphic to the 3-sphere, which we   
denote by $M_1$. By construction, the complex tangencies define a negative contact structure on $M_1$. 
Thus, it is a strictly pseudoconcave hypersurface embedded in $C\subset E$. 

In the above construction, the strictly pseudoconcavity of $M_1$ is nothing but the conditions (1), (2), (3) and the strictly convexity of $\widetilde {h}$. Such functions $f_1$, $f_2$ and $\widetilde{h}$ can be flexibly chosen so that the corresponding family of strictly pseudoconcave 3-spheres foliates the complex domain $\Int C_1$. 

Then, the following lemma proves the existence of a strictly plurisubharmonic function, and this concludes the proof. 
\end{proof}

\begin{lemma}
Let $\gamma \colon X\to \R$ be a proper smooth regular function on a complex manifold $X$ 
such that the complex tangencies define a contact structure on the level sets $\gamma ^{-1}(c)$ for all $c\in \gamma (X)$. 
Then, there exists a smooth regular function $g \colon \gamma(X)\to \R$ such that $g\circ \gamma $ is strictly plurisubharmonic on $X$. 
\end{lemma}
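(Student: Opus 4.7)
My plan is to reduce strict plurisubharmonicity of $u = g\circ\gamma$ to a scalar ODE inequality for $g$ on the open set $\gamma(X)\subset\R$. The starting point is the chain-rule identity
\begin{equation*}
\partial\bar\partial u = g'(\gamma)\,\partial\bar\partial\gamma + g''(\gamma)\,\partial\gamma \wedge \bar\partial\gamma,
\end{equation*}
whose associated Hermitian form on a $(1,0)$-vector $v$ at $x\in X$ equals $g'(\gamma)\,\langle\partial\bar\partial\gamma,\, v\otimes\bar v\rangle + g''(\gamma)\,|\partial\gamma(v)|^{2}$. The second summand is rank-one positive semidefinite with kernel exactly the complex tangency $T^{1,0}M_{\gamma(x)}$, while the restriction of $\partial\bar\partial\gamma$ to $T^{1,0}M_c$ is the Levi form of the contact hypersurface $M_c$, nondegenerate by hypothesis.

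First I would fix the sign of $g'$. By continuity of the Levi form, connectedness of $X$, and its nondegeneracy on the complex tangencies, the signature of $\partial\bar\partial\gamma|_{T^{1,0}M}$ is constant; in the pseudoconcave case (the one of interest here) it is negative definite, so choosing $g'<0$ makes the first summand positive definite along all complex tangencies simultaneously, taking care of every $v$ with $\partial\gamma(v)=0$. The pseudoconvex case is symmetric with $g'>0$.

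Next I would handle the transverse direction via a Schur complement. Pick a smooth $(1,0)$-vector field $N$ transverse to the level foliation, e.g.\ the $(1,0)$-part of $\grad\gamma$, and decompose $v = w + tN$ with $w\in T^{1,0}M_{\gamma(x)}$. The induced block decomposition of $\partial\bar\partial u$ reduces its positive definiteness to the tangential positivity already secured, together with a scalar Schur inequality of the form
\begin{equation*}
g''(c) > |g'(c)|\,E(x), \qquad c = \gamma(x),
\end{equation*}
for a continuous function $E\colon X\to\R$ built from $\partial\bar\partial\gamma$ and $\partial\gamma(N)$. Properness of $\gamma$ makes $\gamma^{-1}(c)$ compact, so $F(c) = \sup_{x\in\gamma^{-1}(c)}E(x)$ is finite and continuous in $c$, and can be replaced by a smooth upper bound if needed.

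Finally I would solve the resulting ODE. Substituting $g'(c) = -e^{-G(c)}$ converts $g''(c) + g'(c)\,F(c) > 0$ into $G'(c) > F(c)$, which is satisfied by taking $G$ to be any antiderivative of a smooth function dominating $F+1$ on the interval $\gamma(X)$. Integrating $g'$ yields a smooth strictly decreasing, hence regular, $g$ such that $g\circ\gamma$ is strictly plurisubharmonic on $X$. The main obstacle I anticipate is the Schur-complement bookkeeping: one has to verify that the pointwise bound $E(x)$ is indeed a well-defined continuous function on $X$, that the fibrewise supremum $F$ admits a smooth upper bound, and that solvability of the ODE is insensitive to the auxiliary choice of $N$. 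These are straightforward consequences of the smoothness of the Levi form and properness of $\gamma$, but they constitute the only nontrivial point in the argument.
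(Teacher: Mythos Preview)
Your proposal is correct and follows essentially the same route as the paper: both start from the chain-rule identity $\partial\bar\partial(g\circ\gamma)=g'(\gamma)\,\partial\bar\partial\gamma+g''(\gamma)\,\partial\gamma\wedge\bar\partial\gamma$, use the sign of $g'$ to make the tangential Levi form positive, and then push $g''/|g'|$ large to control the transverse direction.

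The only real difference is thoroughness. The paper simply asserts ``choose $g$ so that $g'>0$, $g''>0$ and $g''/g'$ is large enough,'' without writing down the Schur complement, without invoking properness to turn the pointwise bound into a continuous function $F(c)$ of the level, and without solving the resulting inequality. Your version makes all three of these steps explicit, including the substitution $g'=-e^{-G}$ that reduces the problem to $G'>F$. You also handle the sign more carefully: the paper writes $d\gamma(J[u,Ju])>0$ as a direct consequence of contactness, whereas in fact contactness only gives $\neq 0$ and one needs connectedness (plus, in the paper's complex-dimension-two setting, the fact that the Levi form is a scalar) to fix the sign. Your constant-signature remark covers this. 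So your argument is the paper's argument with the gaps filled in; nothing is missing.
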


\begin{proof}

We use the following notations:
$$d=\partial+\bar\partial, \;\; d^{\C}=i(\partial-\bar\partial ). $$
Notice that $d^{\C}\phi = (d\phi) \circ J$ for every function $\phi$. 
Since $d^{\C}(g\circ \gamma)= (g'\circ \gamma )\,d^{\C}\gamma $, we have 
$$-dd^{\C}(g\circ \gamma )=-d(g'(\gamma )d^{\C}\gamma )=-g''(\gamma )d\gamma \wedge d^{\C}\gamma -g'(\gamma )dd^{\C}\gamma .$$
The term $-d\gamma \wedge d^{\C}\gamma $ is positive on any complex line except on the complex tangency of the level set $\gamma ^{-1}(c)$, and zero on the complex tangency, 
because \begin{eqnarray*}
-d\gamma \wedge d^{\C}\gamma (u, Ju)=\dfrac{1}{2}(d\gamma (Ju)d^{\C}\gamma (u)-d\gamma (u)d^{\C}\gamma (Ju))\\
=\dfrac{1}{2}(d\gamma (Ju)d\gamma (Ju)-d\gamma (u)d\gamma (-u))=\dfrac{1}{2}((d\gamma (u))^2+(d\gamma (Ju))^2).
\end{eqnarray*}
On the other hand, the term $-dd^{\C}\gamma $ is positive on the complex tangency of $\gamma ^{-1}(c)$. 
Indeed, if $u, Ju\in T\gamma ^{-1}(c)$, then
\begin{eqnarray*}
-dd^{\C}\gamma (u, Ju)=\dfrac{1}{2}(d^{\C}\gamma ([u, Ju])-u(d^{\C}\gamma (Ju))+Ju(d^{\C}\gamma (u)))\\
=\dfrac{1}{2}(d\gamma (J[u, Ju])-u(d\gamma (-u))+Ju(d\gamma (Ju)))=\dfrac{1}{2}d\gamma (J[u, Ju])>0.
\end{eqnarray*}
Notice that the last inequality is the consequence of the contactness assumption. 
Hence, if we choose $g$ so that $g'>0$, $g''>0$ and $g''/g'$ is large enough on $\gamma (X)$, 
then $g\circ \gamma $ is strictly plurisubharmonic on $X$.  
\end{proof}

\begin{proof}[Proof of Main Theorem]
Endow $B^4$ with the complex structure $J$ induced by an o\-ri\-en\-ta\-tion-preserving diffeomorphism $B^4 \cong D_1$, the 4-ball in $E$ bounded by $M_1$. Then, $(B^4, J)$ is of Calabi-Eckmann type and with strictly pseudoconcave boundary $(S^3, \xi)$, where $\xi$ is the induced contact structure.

Since $J$ is homotopic, through almost complex structures, to the standard complex structure of $B^4 \subset \C^2$, the boundary contact structure $\xi$ is homotopic as a plane field to the standard positive tight contact structure of $S^3$.

We are left to show the compatibility of the contact structure on $M_1 \cong S^3$ with the open book decomposition inherited from $M$ by a suitable diffeomorphism $\phi \colon M \to M_1$ compatible with the splitting $M = (\Bd A \times B^2) \cup_\psi (A \times S^1)$ of the definition of $M$ in Section \ref{psh}, and the splitting $M_1 = H_1 \cup H_2 \cup S$ above, that is $\phi(\Bd A \times B^2) = H_1 \cup H_2$ and $\phi(A \times S^1) = S$. We want to prove that the contact form $\alpha$ is positive on the binding (oriented as the boundary of a page) and that $d\alpha$ is a volume form on the pages (oriented as holomorphic curves of $E$) of the open book decomposition, see \cite[Section 9.2]{OZ04}.

Since $u$ is strictly plurisubharmonic on $C_1$, the 1-form $\alpha =-d^{\C}u$ is a contact form on each level set of $u$, and the 2-form $d\alpha $ defines a symplectic structure compatible with the complex structure $J$. The contactness of $M_1$ is equivalent to the fact that the restriction $(\alpha \wedge d\alpha)_{|TM_1}$ is a volume form. On the other hand, the open book decomposition of $M_1$ is given by the function $$\arg z_2=\dfrac{z_2}{||z_2||}\colon M_1 - z_2^{-1}(0)\to S^1.$$ The vector $\frac{\partial}{\partial \theta _1}$ is tangent to the binding and the tangent space of the page is spanned by $\frac{\partial}{\partial \theta _1}$ and $V$, where $V$ is the tangent vector of the curve defined by $f_1$, $f_2$ and $h$. 

Now, we check the compatibility. 
It follows from the condition (1) of $f_1$ and $f_2$ that $$\alpha \left( \dfrac{\partial}{\partial \theta _1} \right)_{\!\! z_2=0}=-d^{\C}u \left(\dfrac{\partial}{\partial \theta _1}\right)_{\!\! z_2=0}=r_1 \left(\dfrac{\partial u}{\partial r _1}\right)_{\!\! z_2=0}\ne 0,$$ which implies 
the positivity of $\alpha $ along the binding. 

In order to see that $d\alpha $ is a volume form on the pages, it is enough to show that the vectors $\frac{\partial}{\partial \theta _1}$, $V$ and $R$ span the tangent space of $M_1$, where $$R=J \left(\dfrac{\grad u}{||\grad u ||}\right)$$ is the Reeb vector field of the contact form $\alpha_{| TM_1}$. 
Since the $r_2$ component of the gradient vector is positive except on the binding, so is the $\theta _2$ component of $R$. Therefore, the three vectors indeed span the tangent space except on the binding.  
\end{proof}






\section*{Acknowledgements}
The first author is supported by JSPS KAKENHI Grant Number JP17K14193. 
The second author acknowledges support of the 2013 ERC Advanced Research Grant 340258 TADMICAMT.
He is member of the group GNSAGA of Istituto Nazionale di Alta Matematica ``Francesco Severi'', Italy.

\let\emph\textsl

\end{document}